\newtheorem{theorem}{Theorem}
\newtheorem{proposition}{Proposition}
\theoremstyle{definition}
\newtheorem{definition}{Definition}
\begin{document}

\title{Insulated primes}

\author{Abhimanyu Kumar}
\address{Department of Electrical and Instrumentation Engineering, Thapar Institute of Engineering and Department, Patiala, Punjab 147004, India}
\email{akumar6\_be17@thapar.edu}

\author{Anuraag Saxena}
\address{Department of Electronics and Communication Engineering, Thapar Institute of Engineering and Technology, Patiala, Punjab 147004, India}
\email{asaxena1\_be19@thapar.edu}

\begin{abstract}
The degree of insulation of a prime $p$ is defined as the largest interval around it in which no other prime exists. Based on this, the $n$-th prime $p_{n}$ is said to be insulated if and only if its degree of insulation is higher than its neighboring primes. Consequently, a new special sequence emerges given as 7, 13, 23, 37, 53, 67, 89, 103, 113, 131, 139, 157, 173, 181, 193, 211, 233, 277, 293, and so on. This paper presents several properties and intriguing relations concerning degree of insulation and insulated primes. Finally, the reader is left with a captivating open problem.
\end{abstract}

\keywords{Special prime sequences; Prime gaps}
\subjclass[2010]{11A41, 11K31}

\maketitle

\section{Introduction}



Prime numbers and their several special sub-sequences have continuously fascinated both young enthusiasts and experienced researchers \cite{Guy2004}. During the covid-19 lockdown, two new sequences A339270 and A339148 were introduced in OEIS, namely degree of insulation and insulated primes respectively \cite{Kumar2020}.

\begin{definition}
Degree of insulation $D: \mathbb{P} \to \mathbb{N}$ of a prime $p$ is defined as the maximum of the set $X_p = \{m:\pi(p-m)=\pi(p+m)-1, m\in\mathbb{N}\}$, where $\pi(n)$ is the prime counting function.
\end{definition}

Since $D(p)$ can be interpreted as the largest spread around $p$ containing only the prime $p$, so, any deterministic procedure to evaluate $D(p)$ is required to either compute prime counting function $\pi(x)$ or to determine the surrounding primes $(p_{n-1},p_{n+1})$ of the given prime $p_n$. The plot of $D(p)$ values for primes less than $1000$ is shown in Figure \ref{Dp1000}. 

\begin{figure}[h!]
\centering
\includegraphics[scale=0.45]{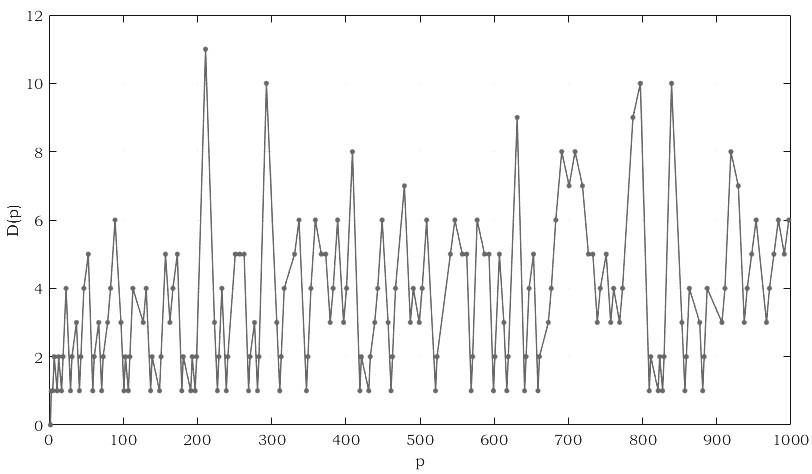}
\caption{Plot of $D(p)$ for primes less than $1000$}
\label{Dp1000}
\end{figure}

Consider the prime triplet $(p_{n-1},p_{n},p_{n+1})=(19, 23, 29)$, then degree of insulation for prime $p_{n-1} = 19$ is calculated as follows: 
\begin{align*}
\pi(19-1) &\overset{\mathrm{?}}{=} \pi(19+1)-1 \Rightarrow 7\overset{\mathrm{?}}{=}8-1 \Rightarrow 7 = 7 \\
\pi(19-2) &\overset{\mathrm{?}}{=} \pi(19+2)-1 \Rightarrow 7\overset{\mathrm{?}}{=}8-1 \Rightarrow 7 = 7 \\
\pi(19-3) &\overset{\mathrm{?}}{=} \pi(19+3) -1 \Rightarrow 7\overset{\mathrm{?}}{=}9-1 \Rightarrow 7 \neq  8
\end{align*}
which gives $D(19)=2$. This process highlights two key results: (a) if $\alpha\notin X_p$ then $(\alpha+r)\notin X_p$ for all $r\geq 0$, and (b) if $\alpha\notin X_p$ then $D(p)<\alpha$ for prime $p$. On similar lines as illustarted above, one can evaluate $D(23)=4$ and $D(29)=1$. Importantly, the observation that $D(23)>D(19)$ and $D(23)>D(29)$ gives rise to the idea of insulated primes which is formally defined below.

\begin{definition}
The $n$-th prime $p_{n}$ is said to be insulated if and only if $D(p_{n}) > \max\{D(p_{n-1}), D(p_{n+1})\}$.
\end{definition}

\begin{figure}[h!]
\centering
\includegraphics[scale=0.45]{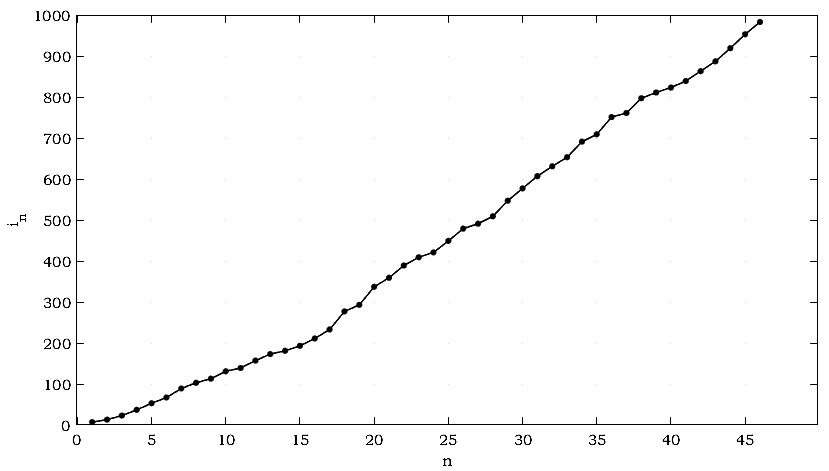}
\caption{Plot of $i_n$ versus $n$ for primes less than $1000$}
\label{in1000}
\end{figure}

Figure \ref{in1000} shows the plot of $n$-th insulated prime $i_n$ vs $n$. Some quick observations regarding insulated primes\footnote{Since $D(p)$ essentially insulates $p$ from neighboring primes, so the name ``insulated primes" is given. Initially, ``isolated" was intended to be used, but it is already taken (OEIS: A023188).} are: (a) primes just adjacent to an insulated prime can never be insulated, (b) $i_n$ seems to obey a linear-like fit. 

With the complete motivation behind this concept laid out, the subsequent sections shall investigate it further. 
The remainder of the paper focuses on analytic and heuristic analysis of degree of insulation and insulated primes.

\section{Main analysis and results}

\begin{proposition}
Primes just adjacent to an insulated prime can never be insulated.
\end{proposition}
\begin{proof}
Let $p_n$ be an insulated prime, then $D(p_{n-1})<D(p_{n})$ and $D(p_{n+1})<D(p_{n})$ by definition. Now, for $p_{n-1}$ to be insulated prime, the conditions $D(p_{n-2})<D(p_{n-1})$ and $D(p_{n})<D(p_{n-1})$ must hold. Clearly, the latter condition is contradictory to the condition for $p_n$, therefore, $p_{n-1}$ cannot an insulated prime. For $p_{n+1}$ to be an insulated prime, the conditions $D(p_{n})<D(p_{n+1})$ and $D(p_{n+2})<D(p_{n+1})$ must hold. But in this case, the prior condition would not be feasible, therefore, $p_{n+1}$ is also not an insulated prime. Hence, proved.
\end{proof}

%

\begin{proposition} \label{Xp}
For prime $p$, if $\alpha\notin X_p$ then $D(p)<\alpha$.
\end{proposition}
\begin{proof}
Since $\pi(n)$ is an increasing function, so for every $r\geq 0$, we have $\pi(p+\alpha+r)\geq\pi(p+\alpha)$ and $\pi(p-\alpha)\geq\pi(p-\alpha-r)$ which combines to give $\pi(p+\alpha+r)-\pi(p-\alpha-r)\geq \pi(p+\alpha)-\pi(p-\alpha)$. As $X_p$ contains all the possible candidates for being $D(p)$ and $\alpha\notin X_p$, then $\pi(p+\alpha)-\pi(p-\alpha)> 1$ because there exists at least one prime $p$ such that $p-\alpha\leq p \leq p+\alpha$. One gets $\pi(p+\alpha+r)-\pi(p-\alpha-r)\geq \pi(p+\alpha)-\pi(p-\alpha) > 1$ which implies $\pi(p+\alpha+r)-\pi(p-\alpha-r)\neq 1$. This concludes that $\alpha+r$ is also not a possible candidate for $D(p)$, thus, $D(p)$ must be less than $\alpha$.
\end{proof}

\begin{theorem}
For a prime $p$, every $m\in X_p$ obeys
\begin{equation} \label{check}
\log(p+m)\log(p-m) < m\log\left((p-m)^c (p+m)\right) + p\log\left(\frac{(p-m)^c}{p+m}\right)
\end{equation}
provided $p\geq 17$, where $c=30\frac{\log{113}}{113}$.
\end{theorem}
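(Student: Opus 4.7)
The plan is to translate the membership condition $m \in X$ into a relation on the prime-counting function and then apply the classical explicit Rosser--Schoenfeld bounds. By definition of $X$, we have $m \in X$ precisely when $\pi(p+m) - \pi(p-m) = 1$, i.e.\ $p$ is the only prime in $[p-m,\,p+m]$. So proving \eqref{check} amounts to deriving a sufficiently good upper estimate for $\pi(p+m) - \pi(p-m)$.

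Next, I would invoke the two-sided explicit estimate $x/\ln x < \pi(x) < c \cdot x/\ln x$, where $c = 30\ln(113)/113 \approx 1.25506$ is precisely the Rosser--Schoenfeld constant, attained with equality at $x=113$ because $\pi(113) = 30$ (this is where the otherwise mysterious value of $c$ comes from). Applying the upper bound to $\pi(p+m)$ and the lower bound to $\pi(p-m)$ yields
\begin{equation*}
1 \;=\; \pi(p+m) - \pi(p-m) \;<\; \frac{c(p+m)}{\ln(p+m)} - \frac{p-m}{\ln(p-m)}.
\end{equation*}
Multiplying through by the positive quantity $\ln(p+m)\ln(p-m)$ gives
\begin{equation*}
\ln(p+m)\ln(p-m) \;<\; c(p+m)\ln(p-m) - (p-m)\ln(p+m),
\end{equation*}
and a direct algebraic expansion shows that the right-hand side coincides with $m\log\bigl((p-m)^c(p+m)\bigr) + p\log\bigl((p-m)^c/(p+m)\bigr)$, which is exactly \eqref{check}.

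The main technical obstacle is guaranteeing the lower bound $x/\ln x < \pi(x)$ actually applies at $x = p - m$, since we only directly control the size of $p$ and not that of $p-m$: the standard form requires $p-m \geq 17$, whereas $m \in X$ only ensures $p - m \geq 2$. I would resolve this by combining an elementary upper estimate on $m$ (for instance Bertrand's postulate, which yields a prime strictly between $p/2$ and $p$ and hence forces $m$ to be substantially smaller than $p$) with a slightly sharpened form of the lower Rosser--Schoenfeld bound valid from $x \geq 11$, and by checking the handful of residual small cases that the hypothesis $p \geq 17$ leaves open by direct computation. The strictness of the final inequality is automatic from the strictness of both underlying prime-counting bounds.
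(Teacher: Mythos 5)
Your argument reproduces the paper's proof: read $m\in X$ as $\pi(p+m)-\pi(p-m)=1$, apply the Rosser--Schoenfeld upper bound at $p+m$ and the lower bound at $p-m$, clear denominators by multiplying through by $\log(p+m)\log(p-m)$, and expand the logarithms to obtain the stated form (the paper compresses these last two steps into ``on solving the expression and rearranging''). The extra care you take is warranted and in fact exposes a point the paper's own proof glosses over: the lower bound $x/\log x<\pi(x)$ is invoked at $x=p-m$, not at $x=p$, so the hypothesis $p\geq 17$ does not by itself license it; your Bertrand-type observation that $m<p/2$ forces $p-m>p/2\geq 17$ once $p\geq 34$, together with a direct check of the finitely many primes $17\leq p\leq 31$, is exactly what is needed to close that small gap.
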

\begin{proof}
From \cite{Rosser1962}, we have
\begin{equation*}
\frac{x}{\log{x}} < \pi(x) < \frac{cx}{\log{x}}
\end{equation*}
which gives
\begin{equation*}
\pi(p+m) - \pi(p-m) < \frac{c(p+m)}{\log(p+m)} - \frac{p-m}{\log(p-m)} .
\end{equation*}
For $m$ to belong to set $X_p$, the left hand side of the inequality must be one. On solving the expression and rearranging, we get the desired result.
\end{proof}

\begin{proposition}
For primes $p_n\geq 3$, if $D(p_n)=1$ then $D(p_{n+1})=2$.
\end{proposition}
\begin{proof}
The given $D(p_n)=1$ implies 
\begin{align*}
\pi(p_{n}-1) = \pi(p_{n}+1)-1 \\
\pi(p_{n}-2) \neq \pi(p_{n}+2)-1 ,
\end{align*}
and remain negated thereafter. In view of Proposition \ref{Xp}, only first two conditions are relevant for subsequent analysis. Assuming $p_n-2$ is a prime, one finds that
\begin{align*}
\pi(p_{n}-1) &\overset{\mathrm{?}}{=} \pi(p_{n}+1)-1 \Rightarrow n-1 = n-1 \\
\pi(p_{n}-2) &\overset{\mathrm{?}}{=} \pi(p_{n}+2)-1 \Rightarrow n-1= n-1
\end{align*}
which is a contradiction since the second condition should have been a non-equality, so $p_n-2$ is not a prime. Therefore, clearly $p_n+2$ must be the prime (which may be checked likewise); hence, $p_{n+1}=p_n+2$. Now, to find $D(p_{n+1})$:
\begin{align*}
\pi(p_{n+1}-1) &\overset{\mathrm{?}}{=} \pi(p_{n+1}+1)-1 \Rightarrow n\overset{\mathrm{?}}{=}(n+1)-1 \Rightarrow n = n \\
\pi(p_{n+1}-2) &\overset{\mathrm{?}}{=} \pi(p_{n+1}+2)-1 \Rightarrow n\overset{\mathrm{?}}{=}(n+1)-1 \Rightarrow n = n \\
\pi(p_{n+1}-3) &\overset{\mathrm{?}}{=} \pi(p_{n+1}+3)-1 \Rightarrow n-1\overset{\mathrm{?}}{=}(n+1)-1 \Rightarrow n-1 \neq n
\end{align*}
which shows $D(p_{n+1})=2$.
\end{proof}

\begin{theorem} \label{Dg}
We have $D(p_k) = \min\{p_{k+1}-p_k-1, p_k-p_{k-1}\}$.
\end{theorem}
\begin{proof}
Let $\lfloor . \rfloor_{\mathbb{P}}$ and $\lceil . \rceil_{\mathbb{P}}$ be the prime floor and prime ceiling functions \cite{Kumar2022}. Degree of insulation $D(p) = \max\{m:\pi(p-m)=\pi(p+m)-1, m\in\mathbb{N}\}$ can be equivalently expressed as $D(p) = \max\{m: \lfloor{p+m}\rfloor_{\mathbb{P}}=\lceil{p-m}\rceil_{\mathbb{P}}, m\in\mathbb{N}\}$. This shows $D(p_k)$ is the largest $m$ such that there is no prime except $p_k$ from $p_k-m+1$ to $p_k+m$. Hence, we have $D(p_k) = \min\{p_{k+1}-p_k-1, p_k-p_{k-1}\} = \min\{\lceil{p+1}\rceil_{\mathbb{P}}-p-1, p-\lfloor{p-1}\rfloor_{\mathbb{P}}\}$. 
\end{proof}

\begin{proposition} \label{D<}
There exists a constant $\theta$ such that $D(p_k) < p_k^\theta - 1$ for sufficiently large $k$.
\end{proposition}
\begin{proof}
Hoheisel \cite{Hoheisel1930} showed that there exists a constant $\theta < 1$ such that
\begin{align*}
\pi(x+x^{\theta}) - \pi(x) \sim \frac{x^{\theta}}{\log(x)} \quad \text{as} ~ x\to \infty,
\end{align*}
hence, showing $p_{n+1} - p_n < p_n^\theta$ for large $n$. Using it with Theorem \ref{Dg} gives
\begin{align*}
D(p_k) &< \min\{p_k^\theta-1, p_{k-1}^\theta\} \\
&= p_k^\theta \min\left\{1-\frac{1}{p_k^\theta}, \left(\frac{p_{k-1}}{p_k}\right)^\theta\right\} \\
&< p_k^\theta \min\left\{1-\frac{1}{p_k^\theta}, 1\right\} \\
&= p_k^\theta - 1
\end{align*}
which is the desired result.
\end{proof}

The constant $\theta$ has been extensively studied in literature (for instance, see \cite{Alweiss2018}). Hoheisel obtained the possible value $\frac{32999}{33000}$, which was subsequently improved to $\frac{249}{250}$ by Heilbronn \cite{Heilbronn1933}. Thereafter, its value has been substantially reduced \cite{Chudakov1936, Huxley1972, Heath-Brown1979, Heath-Brown1988}. In 2001, Baker-Harman-Pintz \cite{Baker2001} obtained that $\theta$ may be taken to $0.525$ which is the best known unconditional result. Under the assumption that the Riemann hypothesis is true, much better results are known (for instance, see \cite{Ingham1937, Ramare2003, Dudek2015, Dudek2016, Carneiro2019}).



\begin{theorem}
The condition for insulation of a prime $p_n$ is equivalent to
\begin{align*}
\max\{g_{k+1},\min\{g_{k-1},g_{k-2}+1\}\} < g_k < g_{k-1}+\max\{0, g_k+1-g_{k+1}\} ,
\end{align*}
where $g_n=p_{n+1}-p_n$ represents the gap between consecutive primes. 
\end{theorem}
\begin{proof}
The prime $p_n$ is insulated iff $D(p_k) > \max\{D(p_{k-1}),D(p_{k+1})\}$. It can be written as $D(p_k) - \max\{D(p_{k-1}),D(p_{k+1})\} > 0$ which is equivalent to
\begin{align*}
&\min\{D(p_k)-D(p_{k-1}),D(p_k)-D(p_{k+1})\} > 0 \\
&\min\{\min\{g_k-1, g_{k-1}\}-D(p_{k-1}),\min\{g_k-1, g_{k-1}\}-D(p_{k+1})\} > 0 \\
&\min\{g_k-1-D(p_{k-1}), g_{k-1}-D(p_{k-1}), g_k-1-D(p_{k+1}), g_{k-1}-D(p_{k-1})\} > 0
\end{align*}
where $D(p_k)$ is substituted in the second line using Theorem \ref{Dg}. The obtained inequality shows that the minimum of the four entries must be positive, which implies that every entry will be positive. Since
\begin{align*}
&g_k-1-D(p_{k-1}) = g_k-1-\min\{g_{k-1}-1, g_{k-2}\} = \max\{g_k-g_{k-1}, g_k-1-g_{k-2}\} , \\
&g_{k-1}-D(p_{k-1}) = g_{k-1}-\min\{g_{k-1}-1, g_{k-2}\} = \max\{1, g_{k-1}-g_{k-2}\} , \\
&g_k-1-D(p_{k+1}) = g_k-1-\min\{g_{k+1}-1, g_{k}\} = -\min\{1, g_{k+1}-g_{k}\} , \\
&g_{k-1}-D(p_{k+1}) = g_{k-1}-\min\{g_{k+1}-1, g_{k}\} = - \min\{g_{k}-g_{k-1}, g_{k+1}-g_{k-1}-1\} ,
\end{align*}
so, we get
\begin{align*}
&\max\{g_k-g_{k-1}, g_k-1-g_{k-2}\} > 0, \\
&\max\{1, g_{k-1}-g_{k-2}\} > 0, \\
&\min\{1, g_{k+1}-g_{k}\} < 0, \\
&\min\{g_{k}-g_{k-1}, g_{k+1}-g_{k-1}-1\} < 0
\end{align*}
which are the desired conditions. Notice that the second inequality $\max\{1, g_{k-1}-g_{k-2}\} > 0$ is trivially true because the presence of $1$ makes the condition independent of the value of $g_{k-1}-g_{k-2}$. Also note that the third condition $\min\{1, g_{k+1}-g_{k}\} < 0$ will be true if and only if $g_{k+1}-g_{k} < 0$. The first and fourth inequalities can be expressed as
\begin{align*}
&g_k-g_{k-1}+\max\{0, g_{k-1}-1-g_{k-2}\} > 0, \\
&g_{k}-g_{k-1}+\min\{0, g_{k+1}-1-g_k\} < 0
\end{align*}
respectively, and combined into a single condition:
\begin{align*}
\min\{0, g_{k-2}+1-g_{k-1}\} < g_k-g_{k-1} < \max\{0, g_k+1-g_{k+1}\} .
\end{align*}
So, we now have
\begin{align*}
&g_{k+1} < g_k, \\
&g_{k-1}+\min\{0, g_{k-2}+1-g_{k-1}\} < g_k < g_{k-1}+\max\{0, g_k+1-g_{k+1}\}
\end{align*}
that can be further combined into a single condition which is our final result.
\end{proof}

Despite the apparent similarity between degree of insulation and gap between primes, note that unlike gaps, the value of $D(p)$ can be odd as well.

\begin{theorem}
For a prime $p > 2$, if $D(p)$ is odd then $p+(D(p)+1)$ is prime, else if $D(p)$ is even then $p-D(p)$ is prime.
\end{theorem}
\begin{proof}
From Theorem \ref{Dg}, $D(p_k) = \min\{p_{k+1}-p_k-1, p_k-p_{k-1}\}$. Since the difference of two primes is always even, so, if $D(p_k)$ is odd then $D(p_k) = p_{k+1}-p_k-1 \Rightarrow p_{k+1} = p_k+D(p_k)+1$, else if $D(p_k)$ is even then $D(p_k) = p_{k}-p_{k-1} \Rightarrow p_{k-1} = p_k-D(p_k)$. 
\end{proof}

In order to carry out a formal study, let us first define:
\begin{equation}
f_k = \frac{\#\{p: D(p)=k ~\& ~ p\leq x \}}{\#\{p: p\leq x \}}
\end{equation}
which refers to the fraction of primes with $D(p)=k$ over all primes below $x$. Figure \ref{freq1} is a scatter plot which is evidently dense for smaller values of $D(p)$. This phenomenon is well captured in Figure \ref{freq2} which is the plot of $f_k$. The graph depicts that the actual values approximately lie on the Gaussian curve:
\begin{equation} \label{fk}
\frac{\#\{p: D(p)=k ~\& ~ p\leq x \}}{\#\{p: p\leq x \}} \approx \frac{1}{\sqrt{2\pi} \sigma(x)} \exp\left(-\frac{1}{2}\left(\frac{k-1}{\sigma(x)}\right)^2\right)
\end{equation}
where $\sigma$ is a parameter dependent on $x$.

\begin{figure}[h!]
\centering
\includegraphics[scale=0.6]{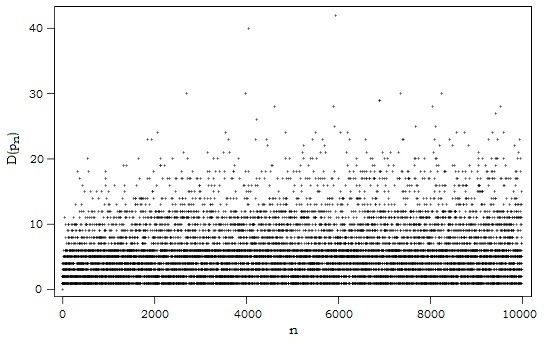}
\caption{Scatter plot of $D(p_n)$ for $n<10000$}
\label{freq1}
\end{figure}

\begin{figure}[h!]
\centering
\includegraphics[scale=0.41]{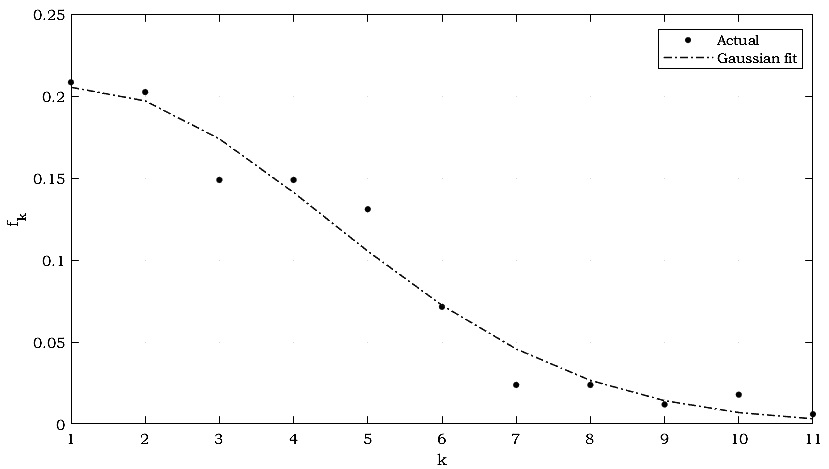}
\caption{Plot of $f_k$ with $1\leq k\leq 11$ for primes $p<1000$}
\label{freq2}
\end{figure}

%

\begin{theorem}
If Hardy-Littlewood conjecture on twin-primes is true, then
\begin{equation}
\#\{p: D(p)=k ~\& ~ p\leq x \} \sim \frac{2Cx}{(\log{x})^2} \exp\left(- 4 \pi C^2 \left(\frac{k-1}{\log{x}} \right)^2\right) ,
\end{equation}
where $C = \prod\limits_{p\in\mathbb{P}_{>2}}\left(1-\frac{1}{(p-1)^2}\right) \approx 0.6601618\dots$ is a constant. 
\end{theorem}
\begin{proof}
For $k=1$, we have
\begin{align*}
\frac{\#\{p: D(p)=1 ~\& ~ p\leq x \}}{\#\{p: p\leq x \}} \approx \frac{1}{\sqrt{2\pi} \sigma(x)} \exp\left(-\frac{1}{2}\left(\frac{1-1}{\sigma(x)}\right)^2\right) = \frac{1}{\sqrt{2\pi} \sigma(x)}  
\end{align*}
where the left-hand-side of the equation can be written as $\frac{\pi_2(x)}{\pi(x)}$ (where $\pi_2(x)$ is the number of twin-primes till $x$) since counting the number of primes with $D(p)=1$ gives the count of twin-prime pairs. Substitute the asymptotic formula for the prime counting function (long established) and twin-prime counting formula (conjectured by Hardy-Littlewood and heuristically verified \cite{Wolf2011}) in the earlier equation to obtain 
\begin{align*}
\frac{1}{\sigma(x)} \approx \frac{\pi_2(x)}{\pi(x)} \sqrt{2\pi} \sim \frac{2C \frac{x}{(\log{x})^2}}{\frac{x}{\log{x}}} \sqrt{2\pi} = \frac{2C\sqrt{2\pi}}{\log{x}} ,
\end{align*}
where $C \approx 0.6601618\dots$ is called the twin-prime constant. Substituting the expression for $\sigma(x)$ in (\ref{fk}) gives
\begin{align*}
\frac{\#\{p: D(p)=k ~\& ~ p\leq x \}}{\#\{p: p\leq x \}} &\sim \frac{2C}{\log{x}} \exp\left(- 4\pi C^2 \left(\frac{k-1}{\log{x}}\right)^2\right) 
\end{align*}
which is the final result.
\end{proof}

Let $\nu(x,g)$ be the number of primes up to $x$ that differ by gap $g$. Zhang in 2014 proved that there are infinitely many pairs of primes that differ by $g$ for some $g$ less than 70 million \cite{Zhang2014}; a bound which has been improved to 246 unconditionally (see \cite{Maynard2015, Polymath2014} and references therein). Now, if $\nu(x,2)\geq \nu(x,4)\geq \nu(x,6)\geq \ldots \geq \nu(x,g)$ for atleast up to $g=246$ then it would directly imply the infinitude of twin-primes. This is a topic for a separate future work.


\section{Growth pattern in insulated primes}

The natural procedure for finding $D(p)$ is the definition itself (illustrated earlier) involves computation of $\pi(x)$. Another approach is to determine the surrounding primes of the given prime (in view of Theorem \ref{Dg}), however, it might be inconvenient for extremely large primes. The number of computations of $\pi(x)$ can be reduced using the results proved in the last section and applying bracketing techniques for exact and faster numerical computation of $D(p)$. For instance, rather than beginning from $1$ and linearly searching, it is better to choose some $m_0$ as the starting point and then iterate to a better point using bisection method or metaheuristic algorithm. Using Proposition \ref{D<} or a sharper inequality, one can choose a better starting point which may allow faster algorithm for extremely large $p$.


Insulated primes can be interpreted as the sequence of local maxima in the plot of degree of insulation. The sequence of insulated primes is 7, 13, 23, 37, 53, 67, 89, 103, 113, 131, 139, 157, 173, 181, 193, and so on. Using MATLAB command \texttt{cftool} for the Curve fitting toolbox, a variety of curves (with different settings) were tested which showed that $i_n$ obeys a power law. It was observed that the equation $y = 19.18 n^{1.093}$ is an extremely good fit as shown in Figure \ref{in10_5}.

\begin{figure}[h!]
\centering
\includegraphics[scale=0.4]{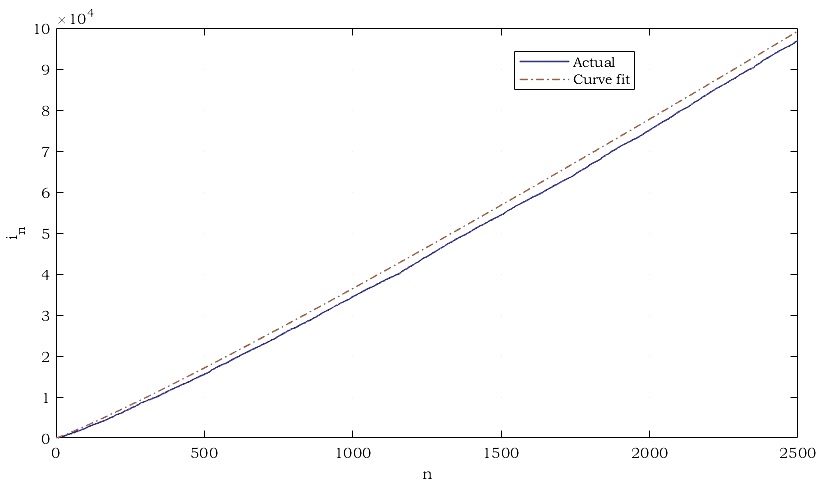}
\caption{Comparison of $i_n$ plots for primes less than $10^5$.}
\label{in10_5}
\end{figure}

The equation performs even better when tested for primes up to one million as shown in Figure \ref{in10_6}. It has an R-square value of almost unity which is extremely good. This allows to conclude that $i_n \sim 19.18 n^{1.093}$ is heuristically an accurate fit. This analysis is sufficient to convince that insulated primes are definitely well-behaved in comparison to primes or other prime subsets. Maybe they behave almost linearly for extremely large $n$, but such a result needs to be rigorously proven. 

\begin{figure}[h!]
\centering
\includegraphics[scale=0.4]{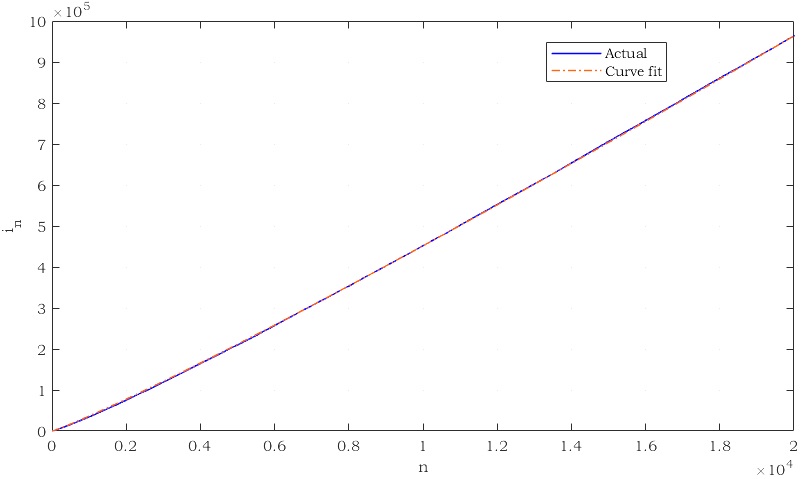}
\caption{Comparison of $i_n$ plots for primes less than $10^6$.}
\label{in10_6}
\end{figure}

\begin{figure}[h!]
\centering
\includegraphics[scale=0.35]{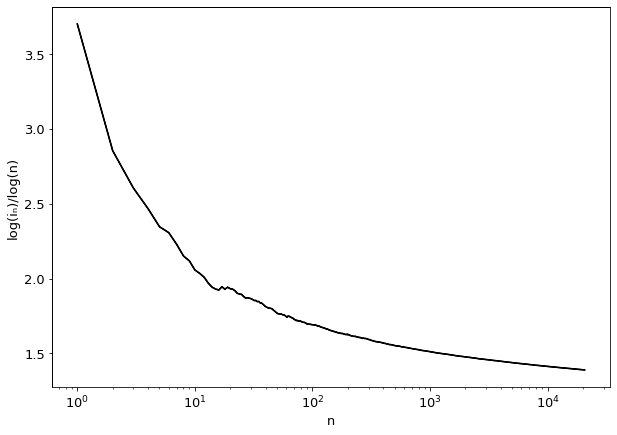}
\caption{Plot of $\frac{\log{i_n}}{\log{n}}$ versus $n$.}
\label{login}
\end{figure}

The idea is investigated here by evaluating $\frac{\log{i_n}}{\log{n}}$ for primes up to one million as shown in Figure \ref{login}. This analysis convincingly showed that the power (in the power law obeyed by $i_n$) will be constant for long intervals of $n$. Still the analytic exploration is open for interested reader.

\section{Conclusion and Future scope}

This paper explores the properties of a new sequence called the insulated primes. They are defined using the concept of degree of insulation which has deep connection with gaps between consecutive primes. Mathematical analysis is conducted which unraveled several intriguing results. Finally, heuristics show that $n$-th insulated prime $i_n$ obeys a power law. The sequence of insulated primes is found to be well-behaved in comparison to the regular primes. 

An extension of the insulated primes is the highly insulated primes. Applying the definition of insulation on the set of primes $\mathbb{P}$ gave the set of insulated primes $\mathbb{I}$, likewise applying the definition of insulation on $\mathbb{I}$ gives the set of highly insulated primes $\mathbb{I}_H$ where $i_n$ is highly insulated iff $D(i_n) > \max\{D(i_{n-1}),D(i_{n+1})\}$. This is recorded as sequence A339188 in OEIS listed as 23, 53, 89, 211, 293, and so on. Interested reader may investigate this sequence. In addition to this, observing that application of the concept of insulation repeatedly reduces the size of the subsequent set, we wonder whether its possible that at some stage the resulting set becomes finite. So, it is worthwhile to ponder on the cardinality of a set obtained by repeated insulation of primes. 


\vskip 10pt
\noindent {\bf Availability of code.}
Mathematica codes of the sequences can be found in OEIS. Python code is made available at \url{https://github.com/anuraag-saxena/Insulated-Primes}.


\vskip 10pt
\noindent {\bf Conflict of Interests.}
The authors have no competing interests to disclose.

\end{document}